\rm \setlength{\textwidth}{160mm}
\newtheorem{theorem}{Theorem}
\newtheorem{definition} {Definition}
\newtheorem{claim}{Claim}
\newtheorem{corollary}{Corollary}
\newtheorem{observation}{Observation}
\title{\bf New skew Laplacian energy of a simple
digraph\footnote{Supported by NSFC and the ``973'' program.}}
\author{\small Qingqiong Cai, Xueliang Li, Jiangli Song\\
{\small Center for Combinatorics and LPMC-TJKLC}\\ {\small Nankai
University}\\ {\small Tianjin 300071, China}\\ {\small Email:
cqqnjnu620@163.com, lxl@nankai.edu.cn, songjiangli@mail.nankai.edu.cn}}
\date{}
\begin{document}
\maketitle

\begin{abstract}
For a simple digraph $G$ of order $n$ with vertex set
$\{v_1,v_2,\ldots, v_n\}$, let $d_i^+$ and $d_i^-$ denote the
out-degree and in-degree of a vertex $v_i$ in $G$, respectively. Let
$D^+(G)=diag(d_1^+,d_2^+,\ldots,d_n^+)$ and
$D^-(G)=diag(d_1^-,d_2^-,\ldots,d_n^-)$. In this paper we introduce
$\widetilde{SL}(G)=\widetilde{D}(G)-S(G)$ to be a new kind of skew Laplacian
matrix of $G$, where $\widetilde{D}(G)=D^+(G)-D^-(G)$ and $S(G)$ is
the skew-adjacency matrix of $G$, and from which we define the skew
Laplacian energy $SLE(G)$ of $G$ as the sum of the norms of all the
eigenvalues of $\widetilde{SL}(G)$. Some lower and upper bounds of the new skew
Laplacian energy are derived and the digraphs attaining these bounds
are also determined.
\end{abstract}

\noindent{\bf Keywords:} energy; Laplacian energy; skew energy; skew
Laplacian energy; eigenvalues; simple digraph.

\noindent{\bf AMS subject classification 2010:} 05C50, 05C20, 05C90

\section {\large Introduction}
In chemistry, there is a close relation between the molecular
orbital energy levels of
 $\pi$-electrons in conjugated hydrocarbons and the eigenvalues of the corresponding molecular graphs.
 On these grounds, in $1970s$, Gutman \cite{gutman 1} introduced the concept of the
 energy for a simple undirected graph $G$:
\begin{center}
$E(G)=\sum\limits_{i=1}^{n}|\lambda_{i}|$
\end{center}
where $\lambda_{1},\lambda_{2},\ldots,\lambda_{n}$ are the
eigenvalues of the adjacency matrix $A(G)$ of $G$. Recall that $A(G)=[a_{ij}]$ is the $n\times n$ matrix,
where $a_{ij}= 1$ if $v_{i}$ and $v_{j}$ are adjacent, and $a_{ij}=
0$ otherwise.  Due to its
applications in chemistry, this concept has attracted much attention
and a series of related papers have been published. We refer to
\cite{gutman3,li} for details.

In spectral graph theory \cite{cvet}, the eigenvalues of several other matrices have been studied,
 of which the Laplacian matrix plays an important role. Therefore, based on the definition of graph energy,
 Gutman and Zhou \cite{gutman2} defined the Laplacian energy for a simple undirected graph $G$
possessing $n$ vertices and $m$ edges, which is given as follows:
\begin{center}
$LE_{g}(G)=\sum\limits_{i=1}^{n}|\mu_{i}-\frac{2m}{n}|$
\end{center}
where $\mu_{1},\mu_{2},\ldots,\mu_{n}$ are the eigenvalues of the
Laplacian matrix $L(G)=D(G)-A(G)$ of $G$. Recall that
$D(G)=diag(d_{1},d_{2},\ldots,d_{n})$ is the diagonal matrix of the
degrees of vertices. Some bounds were derived from the definition
in \cite{gutman2}.
\begin{theorem} \cite{gutman2} \label{thm1}
Let $G$ be a simple undirected graph possessing $n$ vertices, $m$
edges and $p$ components. Assume that $d_{1},d_{2},\ldots,d_{n}$ is
the degree sequence of $G$. Then

(i)~~$2\sqrt{M}\leq LE_{g}(G)\leq \sqrt{2Mn}$ ;

(ii)~~$LE_{g}(G)\leq \frac{2m}{n}p+\sqrt{(n-p)[2M-p(\frac{2m}{n})^{2}]}$;

(iii)~~If $G$ has no isolated vertices, then $LE_{g}(G)\leq 2M$,

\noindent where $M=m+\frac{1}{2}\sum\limits_{i=1}^{n}(d_{i}-\frac{2m}{n})^{2}$.
\end{theorem}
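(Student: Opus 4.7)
The plan is to pivot to the shifted quantities $\gamma_i := \mu_i - 2m/n$, for which two identities are available: $\sum_i \gamma_i = 0$ (because $\mathrm{tr}\, L(G) = 2m$) and $\sum_i \gamma_i^2 = 2M$ (a short trace calculation using $\mathrm{tr}\, L(G)^2 = \sum_i d_i^2 + 2m$, together with the expansion of the square). These identities pin down the mean and second moment of the spectrum of $\gamma_i$'s, and the rest of the theorem is just convexity.

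Given the two identities, part (i) is essentially one line in each direction. The upper bound $LE_g(G)\le\sqrt{2Mn}$ is Cauchy--Schwarz applied to $(|\gamma_i|)_i$ and $(1,\ldots,1)$. For the lower bound I would square to get
\[
LE_g(G)^2 \;=\; \sum_i\gamma_i^2 \,+\, 2\!\sum_{i<j}|\gamma_i|\,|\gamma_j|,
\]
and observe that $\sum_i\gamma_i=0$ forces $2\sum_{i<j}\gamma_i\gamma_j = -\sum_i\gamma_i^2 = -2M$, so the triangle inequality on the cross-term gives $LE_g(G)^2 \ge 2M + 2M = 4M$.

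Part (ii) needs only a mild refinement. A graph with $p$ components has exactly $p$ Laplacian eigenvalues equal to $0$, each of which contributes $2m/n$ to $LE_g(G)$; the zeros thus account for $p\cdot 2m/n$ of the total. On the remaining $n-p$ eigenvalues the sum of squared deviations is $2M - p(2m/n)^2$, and Cauchy--Schwarz applied to those $n-p$ coordinates supplies the second summand $\sqrt{(n-p)[2M - p(2m/n)^2]}$.

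The step I expect to look like the main obstacle is part (iii), but I believe it actually reduces to (i) once one notices the crude size estimate $2M \ge n$. Indeed, if $G$ has no isolated vertex then $d_i \ge 1$ for every $i$, so $2m = \sum_i d_i \ge n$; combined with the trivial bound $2M \ge 2m$ read off from the definition of $M$, this yields $2M \ge n$. Plugging back into (i) gives $LE_g(G) \le \sqrt{2Mn} \le \sqrt{(2M)^{2}} = 2M$. So (iii) is not really a separate argument; the hypothesis ``no isolated vertex'' is used only through the fact that it lifts $2M$ above $n$, at which point the Cauchy--Schwarz bound of (i) becomes sharper than $2M$.
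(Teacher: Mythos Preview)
Your argument is correct. The paper does not actually prove Theorem~\ref{thm1}; it is quoted from \cite{gutman2} without proof. However, the paper's proof of the analogous skew result (Theorem~\ref{thm5} and Corollary~\ref{cor3}) follows exactly the route you outline: the two moment identities for the shifted eigenvalues, Cauchy--Schwarz for the upper bound, the squaring-and-triangle-inequality trick for the lower bound, and for part~(iii) the chain $n\le 2m\le 2M$ combined with the upper bound of~(i). So your proposal matches the approach the paper itself uses in the parallel setting.
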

Moreover, Kragujevac \cite{kragu} considered another definition for
the Laplacian energy using the second spectral moment, namely
$LE_{k}(G)=\sum\limits_{i=1}^{n}\mu_{i}^{2}$. And the author proved
the following result.
\begin{theorem} \cite{kragu} \label{thm2}

(i)~~For any undirected graph $G$ on $n$ vertices whose degrees are
$d_{1},d_{2},\ldots,d_{n}$,
$LE_{k}(G)=\sum\limits_{i=1}^{n}d_{i}(d_{i}+1)$;

(ii)~~For any connected undirected graph $G$ on $n\geq2$ vertices, $LE_{k}(G)\geq 6n-8$,
where the equality holds if and only if $G$ is a path on $n$ vertices.

\end{theorem}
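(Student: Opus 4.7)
The plan is to prove (i) by a direct trace computation and then use (i) to dispose of (ii) by an elementary case analysis on whether $G$ contains a cycle. For part (i), I would use $\sum_{i=1}^{n}\mu_i^2 = \operatorname{tr}(L(G)^2)$, which holds because the $\mu_i$ are the eigenvalues of the symmetric matrix $L(G)$. Expanding $\operatorname{tr}(L(G)^2) = \sum_{i,j} L(G)_{ij}^2$ using $L(G) = D(G) - A(G)$, the diagonal entries contribute $\sum_{i=1}^n d_i^2$ while the off-diagonal entries contribute $\sum_{i \neq j} a_{ij}^2 = 2m = \sum_{i=1}^n d_i$. Summing yields $LE_k(G) = \sum d_i^2 + \sum d_i = \sum d_i(d_i+1)$.

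For part (ii) I would invoke part (i) and split on whether $G$ is a tree. In the cyclic case, $m \geq n$, hence $\sum d_i = 2m \geq 2n$; the Cauchy--Schwarz inequality $(\sum d_i)^2 \leq n \sum d_i^2$ then gives $\sum d_i^2 \geq 4n$, so $LE_k(G) \geq 4n + 2n = 6n$, which strictly exceeds $6n-8$ and rules out equality. This case therefore contributes no new extremal graphs and requires no further analysis.

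In the tree case, $m = n-1$ and $\sum d_i = 2(n-1)$; because $G$ is connected on $n \geq 2$ vertices, every $d_i \geq 1$. I would substitute $d_i = 1 + e_i$ with each $e_i$ a non-negative integer satisfying $\sum e_i = n-2$, which gives $\sum d_i^2 = 3n - 4 + \sum e_i^2$. The elementary bound $e_i^2 \geq e_i$ for non-negative integers yields $\sum e_i^2 \geq n-2$, hence $LE_k(G) \geq (4n - 6) + 2(n-1) = 6n - 8$. Equality forces each $e_i \in \{0,1\}$, i.e., each $d_i \in \{1,2\}$; a tree with all degrees in $\{1,2\}$ is exactly a path, and the path $P_n$ saturates the bound (already visible for $n=2$, where $LE_k(P_2) = 4 = 6 \cdot 2 - 8$).

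I do not anticipate any serious obstacles. The closest thing to a subtle point is the tree case, but the substitution $d_i = 1 + e_i$ converts the minimization of $\sum d_i^2$ over tree degree sequences into the triviality $e^2 \geq e$ for non-negative integers, and the same step delivers the equality characterization at no additional cost. All other ingredients are routine linear algebra and elementary counting.
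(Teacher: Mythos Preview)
Your argument is correct in both parts. Part (i) is the standard trace computation, and your split into the cyclic and tree cases in part (ii) is clean: the Cauchy--Schwarz step handles the cyclic case with room to spare, and the substitution $d_i=1+e_i$ together with $e_i^2\ge e_i$ for non-negative integers gives both the bound and the equality characterization in the tree case without any extra work.

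Note, however, that the paper does not supply its own proof of this statement. Theorem~\ref{thm2} is quoted from \cite{kragu} as background, alongside the other cited results in the introduction, and no argument for it appears in the paper. So there is no in-paper proof to compare your approach against; your write-up stands on its own as a valid proof of the cited result.
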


Since there are situations when chemists use digraphs rather than undirected graphs,
Adiga et al. \cite{adiga} first introduced the skew energy of a simple digraph.
Let $G$ be a simple digraph with vertex set $V(G)= \{v_{1},v_{2},\ldots,v_{n}\}$.
The skew-adjacency matrix of $G$ is the
$n\times n$ matrix $S(G)=[s_{ij}]$, where $s_{ij}= 1$ if $(v_{i},v_{j})$ is an arc of
 $G$, $s_{ij}= -1$ if $(v_{j},v_{i})$ is an arc of $G$, and $s_{ij}= 0$ otherwise. Then the skew energy of
 $G$ is the sum of the norms of all eigenvalues of $S(G)$, that is,
\begin{center}
$E_{s}(G)=\sum\limits_{i=1}^{n}|\lambda_{i}|$,
\end{center}
where $\lambda_{1},\lambda_{2},\ldots,\lambda_{n}$ are the
eigenvalues of the skew-adjacency matrix $S(G)$, which are all pure
imaginary numbers or $0$ since $S(G)$ is skew symmetric.

Similar to $LE_{k}(G)$, Adiga and Smitha \cite{adiga1} defined the
skew Laplacian energy for a simple digraph $G$ as
$$SLE_{k}(G)=\sum\limits_{i=1}^{n}\mu_{i}^{2},$$
where $\mu_{1},\mu_{2},\ldots,\mu_{n}$ are the eigenvalues of the
skew Laplacian matrix $SL(G)=D(G)-S(G)$ of $G$. In analogy with
Theorem \ref{thm1}, the following results were obtained.
\begin{theorem} \cite{adiga1} \label{thm3}

(i)~~For any simple digraph $G$ on $n$ vertices whose degrees are
$d_{1},d_{2},\ldots,d_{n}$, $SLE_{k}(G)=\sum\limits_{i=1}^{n}d_{i}(d_{i}-1)$;

(ii)~~For any connected simple digraph $G$ on $n\geq2$ vertices,
$2n-4\leq SLE_{k}(G)\leq n(n-1)(n-2)$, where the left equality holds
if and only if $G$ is the directed path on $n$ vertices and the
right equality holds if and only if $G$ is the complete digraph on
$n$ vertices.
\end{theorem}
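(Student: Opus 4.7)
The plan is to interpret $SLE_k(G)$ as the second spectral moment $\mathrm{tr}(SL(G)^2)$ and then to extract the bounds from the resulting degree identity.

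For part (i), I would expand
$$SL(G)^2 = (D(G) - S(G))^2 = D^2 - DS - SD + S^2$$
and take traces. Since $D = D(G)$ is diagonal while $S = S(G)$ has zero diagonal, $\mathrm{tr}(DS) = \mathrm{tr}(SD) = 0$, and $\mathrm{tr}(D^2) = \sum_{i=1}^n d_i^2$. For $\mathrm{tr}(S^2)$, the skew-symmetry $s_{ji} = -s_{ij}$ gives $\mathrm{tr}(S^2) = \sum_{i,j} s_{ij} s_{ji} = -\sum_{i,j} s_{ij}^2 = -2m$, where $m$ is the number of arcs of $G$. Combining with the handshake identity $\sum_{i=1}^n d_i = 2m$ yields
$$SLE_k(G) = \sum_{i=1}^n d_i^2 - 2m = \sum_{i=1}^n d_i(d_i-1),$$
which is part (i).

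For part (ii) I would derive both bounds from part (i). For the upper bound, since $G$ is a simple digraph (at most one arc between any two vertices), every total degree satisfies $d_i \leq n-1$, so $d_i(d_i-1) \leq (n-1)(n-2)$ and summation gives $SLE_k(G) \leq n(n-1)(n-2)$. Equality forces $d_i = n-1$ for every $i$, which is exactly the complete digraph (tournament) on $n$ vertices. For the lower bound the key step is the algebraic identity
$$\sum_{i=1}^n d_i(d_i-1) \;=\; 2\sum_{i=1}^n (d_i - 1) \;+\; \sum_{i=1}^n (d_i-1)(d_i-2).$$
Connectedness gives $d_i \geq 1$ for every $i$, so each $(d_i-1)(d_i-2)$ is a non-negative integer. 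Hence $SLE_k(G) \geq 2\sum_{i=1}^n (d_i - 1) = 2(2m - n)$. Connectedness also forces $m \geq n-1$, and plugging this in yields $SLE_k(G) \geq 2n-4$. Equality requires simultaneously $m = n-1$ (so the underlying graph is a tree) and $(d_i-1)(d_i-2) = 0$ for each $i$ (so every vertex has degree $1$ or $2$); a tree whose degrees all lie in $\{1,2\}$ is exactly a path, so $G$ is a directed path on $n$ vertices.

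I do not expect a serious obstacle, since once part (i) and the splitting identity above are in hand, the two bounds follow by a single nonnegativity observation. The only point that calls for some care is the equality analysis of the lower bound, where the two conditions $m = n-1$ and $d_i \in \{1,2\}$ must be combined to pin down the underlying graph as a path; but both conditions are forced jointly by the identity, so this step is clean.
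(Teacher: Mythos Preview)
Your argument is correct in every step: the trace identity $\sum_i \mu_i^2 = \mathrm{tr}(SL(G)^2)$ gives part~(i) exactly as you wrote, and your splitting $d(d-1)=2(d-1)+(d-1)(d-2)$ together with $m\ge n-1$ cleanly yields the lower bound and its equality case. There is, however, nothing in the paper to compare your approach against: Theorem~\ref{thm3} is quoted from \cite{adiga1} without proof, serving only as background for the new definition introduced afterward.
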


Note that Theorem \ref{thm3} shows that the skew Laplacian energy of
a simple digraph defined in this way is independent of its
orientation, which does not reflect the adjacency of the digraph.
Being aware of this, later Adiga and Khoshbakht \cite{adiga2} gave
another definition
$SLE_{g}(G)=\sum\limits_{i=1}^{n}|\mu_{i}-\frac{2m}{n}|$, just like
$LE_{g}(G)$, and established some analogous bounds.
\begin{theorem} \cite{adiga2} \label{thm4}
Let $G$ be a simple digraph possessing $n$ vertices and $m$ edges.
Assume that $d_{1},d_{2},\ldots,d_{n}$ is the degree sequence of $G$
and $\mu_{1},\mu_{2},\ldots,\mu_{n}$ are the eigenvalues of the skew
Laplacian matrix $SL(G)=D(G)-S(G)$. Let
$\gamma_{i}=\mu_{i}-\frac{2m}{n}$ and $|\gamma_{1}|\leq |\gamma_{2}|
\leq \ldots \leq |\gamma_{n}|=k$. Then

(i)~~$2\sqrt{M}\leq SLE_{g}(G)\leq \sqrt{2M_{1}n}$ ;

(ii)~~ $SLE_{g}(G)\leq k+\sqrt{(n-1)(2M_{1}-k^{2})}$;

(iii)~~If $G$ has no isolated vertices, then $SLE_{g}(G)\leq
2M_{1}$,

\noindent where $M=-m+\frac{1}{2}\sum\limits_{i=1}^{n}(d_{i}-\frac{2m}{n})^{2}$
 and $M_{1}=M+2m=m+\frac{1}{2}\sum\limits_{i=1}^{n}(d_{i}-\frac{2m}{n})^{2}$.
\end{theorem}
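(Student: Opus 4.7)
My plan is to imitate the proof of Theorem~\ref{thm1}, adapting for the fact that $SL(G)=D(G)-S(G)$ is non-Hermitian: its eigenvalues $\mu_i$ (and hence $\gamma_i=\mu_i-\tfrac{2m}{n}$) are generally complex, so several identities in the undirected case become only inequalities. Two trace computations drive everything. First, $\sum_i\gamma_i=\operatorname{tr}(SL(G))-2m=\operatorname{tr}(D)-\operatorname{tr}(S)-2m=0$. Second, writing $B=D-\tfrac{2m}{n}I$ and using $S^{T}=-S$,
\[
\bigl\|SL(G)-\tfrac{2m}{n}I\bigr\|_F^{\,2}=\operatorname{tr}\!\bigl((B-S)(B+S)\bigr)=\operatorname{tr}(B^{2})-\operatorname{tr}(S^{2})=\sum_i\Bigl(d_i-\tfrac{2m}{n}\Bigr)^{2}+2m=2M_{1},
\]
the cross-terms $\operatorname{tr}(BS)=\operatorname{tr}(SB)$ cancelling by cyclicity. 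Schur's inequality then supplies the pivotal bound $\sum_i|\gamma_i|^{2}\le\|SL-\tfrac{2m}{n}I\|_F^{\,2}=2M_{1}$, with equality exactly when $SL(G)$ is normal.

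For part~(i), the Cauchy--Schwarz inequality immediately yields
\[
SLE_g(G)=\sum_i|\gamma_i|\le\sqrt{\,n\sum_i|\gamma_i|^{2}\,}\le\sqrt{2M_{1}n}.
\]
For the lower bound I would expand $SLE_g(G)^{2}=\sum_i|\gamma_i|^{2}+2\sum_{i<j}|\gamma_i||\gamma_j|$. A companion trace computation---now with $\operatorname{tr}(S^{2})$ entering with a plus sign---gives $\sum_i\gamma_i^{2}=\operatorname{tr}\!\bigl((SL-\tfrac{2m}{n}I)^{2}\bigr)=\sum(d_i-\tfrac{2m}{n})^{2}-2m=2M$. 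Combining $\sum_i\gamma_i=0$ with the triangle inequality $\sum_i|\gamma_i|^{2}\ge|\sum_i\gamma_i^{2}|$, valid because the spectrum of the real matrix $SL(G)$ is closed under complex conjugation, would then deliver $SLE_g(G)\ge 2\sqrt{M}$.

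Part~(ii) is obtained by peeling off the largest term $|\gamma_n|=k$ and applying Cauchy--Schwarz to the remaining $n-1$ summands using $\sum_{i<n}|\gamma_i|^{2}\le 2M_{1}-k^{2}$, producing $SLE_g(G)\le k+\sqrt{(n-1)(2M_{1}-k^{2})}$. For part~(iii) the no-isolated-vertex hypothesis guarantees that every row of $S(G)$ is non-zero; I would either invoke a Coulson-type integral representation of $SLE_g(G)$ or, following the template of Theorem~\ref{thm1}(iii), split the $\gamma_i$ according to whether $|\gamma_i|$ exceeds $1$ and estimate $\sum_i|\gamma_i|$ against $\sum_i|\gamma_i|^{2}\le 2M_{1}$.

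The genuinely new obstacle, compared with Theorem~\ref{thm1}, is the loss of the identity $\sum_i|\gamma_i|^{2}=\|SL-\tfrac{2m}{n}I\|_F^{\,2}$: because $SL(G)$ is generally non-normal this becomes a strict inequality, which complicates each equality-case analysis (the extremal digraphs will be precisely those for which the shifted skew Laplacian is normal) and forces the lower bound in~(i) to pass through the triangle inequality together with the conjugate-pair structure of the spectrum. A secondary technical point is that the sign of $M$ must be tracked so that $2\sqrt{M}$ remains a meaningful real lower bound---unlike the undirected case, where the analogous quantity is manifestly positive.
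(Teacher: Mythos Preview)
The paper does not actually supply a proof of Theorem~\ref{thm4}; it is quoted from \cite{adiga2} as background. The closest thing to a ``paper's own proof'' is the proof of Theorem~\ref{thm5}, which the authors explicitly say is ``very similar to those of Theorems~1 and~4.'' Your plan matches that template almost exactly: the two trace identities $\sum_i\gamma_i=0$ and $\sum_i\gamma_i^{2}=2M$, Schur's unitary triangularization to obtain $\sum_i|\gamma_i|^{2}\le 2M_1$, Cauchy--Schwarz for the upper bounds in (i) and (ii), and the twin triangle-inequality estimates $\sum_i|\gamma_i|^{2}\ge|2M|$ and $2\sum_{i<j}|\gamma_i||\gamma_j|\ge 2|M|$ for the lower bound. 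Your observation that the equality cases hinge on normality of $SL(G)$, and that the lower bound is really $2\sqrt{|M|}$, also mirrors how the paper handles the analogous issues for $\widetilde{SL}(G)$.

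The one place you overcomplicate things is part~(iii). No Coulson integral or threshold splitting is needed. In the paper (Corollary~\ref{cor3}) the corresponding bound is deduced in one line from (i): if $G$ has no isolated vertices then $n\le 2m$, and since $m\le M_1$ we get
\[
SLE_g(G)\;\le\;\sqrt{2M_1 n}\;\le\;\sqrt{2M_1\cdot 2m}\;=\;2\sqrt{M_1 m}\;\le\;2M_1.
\]
So (iii) is a corollary of the upper bound in (i), not an independent estimate. Apart from this, your proposal is correct and essentially identical in strategy to the paper's own argument for its new result.
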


In 2010, Kissani and Mizoguchi \cite{perera} introduced a different
Laplacian energy for directed graphs, in which only the out-degrees of vertices are
considered rather than both the out-degrees and in-degrees. Let $G$
be a digraph on n vertices. Suppose that
$\mu_{1},\mu_{2},\ldots,\mu_{n}$ are the eigenvalues of the matrix
$L^+(G)=D^+(G)-A^+(G)$, where
$D^+(G)=diag(d^+_{1},d^+_{2},\ldots,d^+_{n})$ is the diagonal matrix
of the out-degrees of vertices in $G$, and $A^+(G)=[a_{ij}]$ is the
$n\times n$ matrix, where $a_{ij}=1$ if $(v_{i},v_{j})$ is an
arc of $G$ and 0 otherwise. Then the Laplacian energy of $G$ defined
in \cite{perera} is
\begin{center}
$LE_{m}(G)=\sum\limits_{i=1}^{n}\mu_{i}^{2}$
\end{center}
By calculation, it is not hard to see that
$LE_{m}(G)=\sum\limits_{i=1}^{n}(d^+_{i})^{2}$ for a simple digraph $G$,
and $LE_{m}(G)=\sum\limits_{i=1}^{n}d^+_{i}(d^+_{i}+1)$ for a symmetric
digraph $G$. Furthermore, in \cite{perera} the authors found some
relations between undirected and directed graphs of $LE_{m}$ and used the
so-called minimization maximum out-degree (MMO) algorithm to
determine the digraphs with minimum Laplacian energy. The shortage
of this definition is that it does not make use of the in-adjacency
information of a digraph.

In this paper, we will introduce a brand-new definition for the skew
Laplacian energy of a simple digraph and obtain some lower and upper bounds
about it.

\section{\large A new skew Laplacian energy of digraphs}

We start with some notation and terminology which will be used in the sequel of this paper.

Given a simple digraph $G$ with vertex set
$V(G)=\{v_{1},v_{2},\ldots,v_{n}\}$, let $d_{i}^{+}$ and $d_{i}^{-}$ denote the out-degree and in-degree of a vertex $v_{i}$ in $G$ respectively. $A(G),S(G),D(G),D^{+}(G)$, $A^{+}(G)$ are defined as above.
Similar to $D^+(G)$, we define
$D^-(G)=diag(d^-_{1},d^-_{2},\ldots,d^-_{n})$, and
$\widetilde{D}(G)=D^+(G)-D^-(G)=diag(d^+_{1}-d^-_{1},d^+_{2}-d^-_{2},\ldots,d^+_{n}-d^-_{n})$.
Obviously, $D(G)=D^+(G)+D^-(G)$. Moreover, similar to $A^+(G)$, let $A^-(G)$ be the $n\times n$ matrix, where
$a_{ij}=1$ if $(v_{j},v_{i})$ is an arc of $G$ and 0 otherwise.
Clearly, $A^-(G)=(A^+(G))^T$. It is easy to see that the adjacency
matrix of the underlying undirected graph $G_{U}$ of $G$ satisfies
$A(G_{U})=A^+(G)+A^-(G)$ and the skew-adjacency matrix of $G$ satisfies
$S(G)=A^+(G)-A^-(G)$. Note that the Laplacian matrix of the
underlying undirected graph $G_{U}$ of $G$ can be written as

\begin{eqnarray*}
L(G_{U}) &=& D(G_{U})-A(G_{U})   \\
   &=& (D^+(G)+D^-(G))-(A^+(G)+A^-(G))\\
   &=& (D^+(G)-A^+(G))+(D^-(G)-A^-(G))
\end{eqnarray*}

Inspired by this, we define a new kind of skew Laplacian matrix
$\widetilde{SL}(G)$ of $G$ as
\begin{eqnarray*}
  \widetilde{SL}(G) &=& (D^+(G)-A^+(G))-(D^-(G)-A^-(G)) \\
   &=& (D^+(G)-D^-(G))-(A^+(G)-A^-(G))\\
   &=& \widetilde{D}(G)-S(G)
\end{eqnarray*}

Let $\mu_{1},\mu_{2},\ldots,\mu_{n}$ be the eigenvalues of the skew
Laplacian matrix $\widetilde{SL}(G)=\widetilde{D}(G)-S(G)$. Since $\widetilde{SL}(G)$ is not symmetric, it does not give real eigenvalues always. However, we
have the following two observations about the eigenvalues of
$\widetilde{SL}(G)$:
\begin{observation}\label{obser1}
$\sum\limits_{i=1}^{n}\mu_{i}=\sum\limits_{i=1}^{n}(d^+_{i}-d^-_{i})=0.$
\end{observation}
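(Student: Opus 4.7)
The plan is to compute the sum of eigenvalues via the trace and then compare against a standard arc-counting identity. Since $\mu_1,\ldots,\mu_n$ are the eigenvalues of $\widetilde{SL}(G)$, the first step is to invoke the standard fact that the sum of the eigenvalues of a square matrix equals its trace, so
\[
\sum_{i=1}^{n}\mu_i = \operatorname{tr}\bigl(\widetilde{SL}(G)\bigr) = \operatorname{tr}\bigl(\widetilde{D}(G)\bigr) - \operatorname{tr}\bigl(S(G)\bigr).
\]

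Next I would evaluate each trace separately. By definition, $\widetilde{D}(G)$ is the diagonal matrix with entries $d_i^+ - d_i^-$, so its trace is precisely $\sum_{i=1}^n (d_i^+ - d_i^-)$. For $S(G)$, since $G$ is a simple digraph (no loops), the diagonal entries $s_{ii}$ are all zero, hence $\operatorname{tr}(S(G)) = 0$. This already establishes the first equality $\sum_i \mu_i = \sum_i (d_i^+ - d_i^-)$.

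The last step is the combinatorial identity $\sum_{i=1}^n (d_i^+ - d_i^-) = 0$. This follows from the familiar arc-counting observation: every arc of $G$ contributes $+1$ to the out-degree of its tail and $+1$ to the in-degree of its head, so $\sum_i d_i^+ = |E(G)| = \sum_i d_i^-$, and subtracting gives zero.

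There is essentially no obstacle here; the only thing to be slightly careful about is that $\widetilde{SL}(G)$ is not symmetric and thus the $\mu_i$ may be complex, but the trace identity holds for any square matrix over $\mathbb{C}$, so the argument goes through unchanged.
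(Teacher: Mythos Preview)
Your proof is correct and follows exactly the same approach as the paper, which simply notes that the relation is evident from $\sum_{i=1}^{n}\mu_i = \operatorname{trace}(\widetilde{SL}(G))$. You have merely spelled out the details (the zero diagonal of $S(G)$ and the arc-counting identity $\sum_i d_i^+ = \sum_i d_i^-$) that the paper leaves implicit.
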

\begin{proof}
The relation is evident from
$\sum\limits_{i=1}^{n}\mu_{i}=trace(\widetilde{SL}(G)).$
\end{proof}
\begin{observation}\label{obser2}
$0$ is an eigenvalue of $\widetilde{SL}(G)$ with multiplicity at least $p$, the
number of components of $G$.
\end{observation}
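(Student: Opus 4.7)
The plan is to exhibit $p$ linearly independent eigenvectors of $\widetilde{SL}(G)$ associated with eigenvalue $0$, one for each component of $G$. The natural candidates are the characteristic vectors $\mathbf{1}_{C_1},\ldots,\mathbf{1}_{C_p}$ of the vertex sets of the components $C_1,\ldots,C_p$ of $G$.

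First I would observe that, after relabeling the vertices so that those in the same component are contiguous, both $\widetilde{D}(G)$ and $S(G)$ are block diagonal with one block for each component (since there are no arcs between components). Hence $\widetilde{SL}(G)=\widetilde{D}(G)-S(G)$ is block diagonal as well, and it suffices to prove that each diagonal block $\widetilde{SL}(C_k)$ has $0$ as an eigenvalue with the restricted all-ones vector as eigenvector; the $p$ resulting null vectors have disjoint supports and are therefore linearly independent.

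The key computation is the row-sum identity for $\widetilde{SL}(G)$. For any vertex $v_i$, the $i$-th entry of $\widetilde{SL}(G)\mathbf{1}$ equals
\[
(d_i^+-d_i^-)-\sum_{j=1}^{n} s_{ij}.
\]
By the definition of $S(G)$, $s_{ij}=+1$ precisely for the $d_i^+$ out-neighbors of $v_i$ and $s_{ij}=-1$ precisely for the $d_i^-$ in-neighbors, so $\sum_j s_{ij}=d_i^+-d_i^-$. Thus the $i$-th row sum is $0$, which gives $\widetilde{SL}(G)\mathbf{1}=\mathbf{0}$. Applying this row-sum identity componentwise yields $\widetilde{SL}(G)\mathbf{1}_{C_k}=\mathbf{0}$ for each $k$.

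There is no real obstacle here; the statement is essentially a row-sum calculation together with the block-diagonal structure induced by the components. The only point that warrants care is the sign bookkeeping in $\sum_j s_{ij}=d_i^+-d_i^-$, which is where the specific definition of the skew-adjacency matrix enters. Everything else follows from elementary linear algebra.
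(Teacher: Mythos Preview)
Your proposal is correct and follows essentially the same approach as the paper: both arguments use the block-diagonal decomposition of $\widetilde{SL}(G)$ by components and observe that each block has zero row sums, so the all-ones vector on each component is a null vector. Your version is in fact slightly more detailed, since you explicitly verify $\sum_j s_{ij}=d_i^+-d_i^-$ from the definition of $S(G)$, whereas the paper simply asserts the row-sum identity.
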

\begin{proof}
Let $\sigma_{\widetilde{SL}}(G)$ denote the set of eigenvalues of the skew
Laplacian matrix $\widetilde{SL}(G)$. Assume that $C_{1},C_{2},\ldots,C_{p}$ are
all the components of $G$. Clearly,
$\sigma_{\widetilde{SL}}(G)=\bigcup\limits_{i=1}^{p}\sigma_{\widetilde{SL}}(C_{i})$. So it
suffices to prove that $0\in \sigma_{\widetilde{SL}}(C_{i})$ for $1\leq i \leq
n$. Now we restrict our attention to the induced subgraph $C_{i}$.
The sum of each row in $\widetilde{SL}(C_{i})$ is $0$, thus $0$ is an eigenvalue
of $\widetilde{SL}(C_{i})$ with eigenvector $[1,1,\ldots,1]^{T}$.
\end{proof}
Note that for the Laplacian matrix of an undirected graph, $0$ is
also an eigenvalue with
 eigenvector $[1,1,\ldots,1]^{T}$.

Now we give the formal definition for a new kind of skew Laplacian
energy.
\begin{definition}
Let $G$ be a simple digraph on $n$ vertices. Then the skew Laplacian
energy of $G$ is defined as
\begin{center}
$SLE(G)=\sum\limits_{i=1}^{n}|\mu_{i}|,$
\end{center}
where $\mu_{1},\mu_{2},\ldots,\mu_{n}$ are the eigenvalues of the
skew Laplacian matrix $\widetilde{SL}(G)=\widetilde{D}(G)-S(G)$ of $G$.
\end{definition}
We illustrate the concept with computing the skew Laplacian energy of two digraphs.

\noindent\textbf{Example 1 :} Let $P_{4}$ be a directed path on four vertices with the arc set
$\{(1,2)(2,3)(3,4)\}$. Then
\begin{center}
$\widetilde{SL}(P_{4})=\left(
                          \begin{array}{cccc}
                            1 & -1 & 0 & 0 \\
                            1 & 0 & -1 & 0 \\
                            0 & 1 & 0 & -1 \\
                            0 & 0 & 1 & -1 \\
                          \end{array}
                        \right)$.
\end{center}
The eigenvalues of $\widetilde{SL}(P_{4})$ are $i\sqrt{2}$, $-i\sqrt{2}$, 0, 0,
and hence the skew Laplacian energy of $P_{4}$ is $2\sqrt{2}$.

\noindent\textbf{Example 2 :} Let $C_{4}$ be a directed cycle
on four vertices with the arc set $\{(1,2)(2,3)(3,4)(4,1)\}$. Then
\begin{center}
$\widetilde{SL}(C_{4})=\left(
                          \begin{array}{cccc}
                            0 & -1 & 0 & 1 \\
                            1 & 0 & -1 & 0 \\
                            0 & 1 & 0 & -1 \\
                            -1 & 0 & 1 & 0 \\
                          \end{array}
                        \right).$
\end{center}
The eigenvalues of $\widetilde{SL}(C_{4})$ are $2i$, $-2i$, 0, 0, and hence the
skew Laplacian energy of $C_{4}$ is $4$, which is the same as the
skew energy of $C_{4}$. Actually, we have the following more general
result:
\begin{theorem}
If $G$ is an Eulerian digraph, then $SLE(G)=E_{s}(G)$.
\end{theorem}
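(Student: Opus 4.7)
The plan is to exploit the defining property of an Eulerian digraph, namely that every vertex has equal in-degree and out-degree, to collapse $\widetilde{SL}(G)$ to (a scalar multiple of) the skew-adjacency matrix $S(G)$, after which the equality of energies is immediate.

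First, I would recall that $G$ being Eulerian (here taken in the digraph sense) means $d_i^+ = d_i^-$ for every vertex $v_i$. Plugged into the definition $\widetilde{D}(G) = \mathrm{diag}(d_1^+ - d_1^-, \ldots, d_n^+ - d_n^-)$, this forces $\widetilde{D}(G)$ to be the zero matrix. Consequently, by the definition of the new skew Laplacian,
\begin{equation*}
\widetilde{SL}(G) \;=\; \widetilde{D}(G) - S(G) \;=\; -S(G).
\end{equation*}

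Next, I would translate this matrix identity into a spectral one. If $\lambda_1, \ldots, \lambda_n$ are the eigenvalues of $S(G)$, then the eigenvalues of $-S(G)$ are exactly $-\lambda_1, \ldots, -\lambda_n$ (with the same multiplicities). Since $|-\lambda_i| = |\lambda_i|$ for every $i$, summing the norms yields
\begin{equation*}
SLE(G) \;=\; \sum_{i=1}^n |\mu_i| \;=\; \sum_{i=1}^n |{-\lambda_i}| \;=\; \sum_{i=1}^n |\lambda_i| \;=\; E_s(G),
\end{equation*}
which is the desired identity.

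Since the argument is essentially a one-line matrix identity followed by the trivial observation that $|\cdot|$ is symmetric under negation, there is no real obstacle; the only thing worth stating carefully is the initial translation from the combinatorial hypothesis (Eulerian) to the algebraic one ($\widetilde{D}(G) = 0$), which is what makes the whole identity work.
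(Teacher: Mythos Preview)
Your proof is correct and follows exactly the same line as the paper's own argument: use the Eulerian hypothesis to force $\widetilde{D}(G)=0$, so that $\widetilde{SL}(G)=-S(G)$, and conclude that the two energies agree. The only difference is that you spell out the (obvious) step that negating a matrix does not change the norms of its eigenvalues, which the paper leaves implicit.
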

\begin{proof} Since $G$ is Eulerian, the out-degree and the in-degree are equal for each vertex in
$G$, and so $\widetilde{D}=0$, which results in $\widetilde{SL}(G)=-S(G)$, and
consequently $SLE(G)=E_{s}(G)$.
\end{proof}

\section{\large Some lower and upper bounds for the new $SLE(G)$}

This section is devoted to obtaining some lower and upper bounds for the
skew Laplacian energy $SLE(G)$ and determining the digraphs attaining these bounds.
\begin{theorem}\label{thm5}
Let $G$ be a simple digraph possessing $n$ vertices, $m$ edges and
$p$ compenents. Assume that $d^+_{i}$ ($d^-_{i}$) is the out-degree
(in-degree) of a vertex $v_{i}$ in $G$. Then

\begin{center}
 $2\sqrt{|M|}\leq SLE(G)\leq \sqrt{2M_{1}(n-p)},$
\end{center}
\noindent where
$M=-m+\frac{1}{2}\sum\limits_{i=1}^{n}(d^+_{i}-d^-_{i})^{2}$ and
$M_{1}=M+2m=m+\frac{1}{2}\sum\limits_{i=1}^{n}(d^+_{i}-d^-_{i})^{2}$.
Moreover, these bounds are sharp.
\end{theorem}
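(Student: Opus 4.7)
The plan is to reduce everything to two trace calculations: $\mathrm{tr}(\widetilde{SL}(G)^{2})$, which equals $\sum\mu_{i}^{2}$ exactly, and $\mathrm{tr}(\widetilde{SL}(G)^{T}\widetilde{SL}(G))$, which dominates $\sum|\mu_{i}|^{2}$ via Schur's classical inequality. Expanding $\widetilde{SL}^{2}=\widetilde{D}^{2}-\widetilde{D}S-S\widetilde{D}+S^{2}$, the cross terms contribute nothing to the trace because $S$ has zero diagonal, while $\mathrm{tr}(\widetilde{D}^{2})=\sum(d_{i}^{+}-d_{i}^{-})^{2}$ and $\mathrm{tr}(S^{2})=-\sum_{i,j}s_{ij}^{2}=-2m$ (each arc contributes $s_{ij}^{2}+s_{ji}^{2}=2$). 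Hence $\sum\mu_{i}^{2}=2M$. Since $\widetilde{SL}^{T}=\widetilde{D}+S$, the analogous expansion of $\widetilde{SL}^{T}\widetilde{SL}=\widetilde{D}^{2}-\widetilde{D}S+S\widetilde{D}-S^{2}$ flips the sign of the $S^{2}$ contribution and yields $\mathrm{tr}(\widetilde{SL}^{T}\widetilde{SL})=2M_{1}$; Schur's inequality then gives $\sum|\mu_{i}|^{2}\le 2M_{1}$, with equality iff $\widetilde{SL}(G)$ is normal.

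For the upper bound I would invoke Observation~\ref{obser2} to conclude that at most $n-p$ of the $\mu_{i}$ are nonzero, and apply Cauchy--Schwarz to these:
\[
SLE(G)=\sum_{\mu_{i}\ne 0}|\mu_{i}|\le \sqrt{n-p}\,\sqrt{\textstyle\sum|\mu_{i}|^{2}}\le \sqrt{2M_{1}(n-p)}.
\]
For the lower bound, I would square the definition to obtain
\[
SLE(G)^{2}=\sum_{i}|\mu_{i}|^{2}+\sum_{i\ne j}|\mu_{i}||\mu_{j}|.
\]
The triangle inequality applied to $\sum\mu_{i}^{2}=2M$ yields $\sum|\mu_{i}|^{2}\ge |2M|=2|M|$. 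On the other hand, Observation~\ref{obser1} combined with $(\sum\mu_{i})^{2}=\sum\mu_{i}^{2}+\sum_{i\ne j}\mu_{i}\mu_{j}$ forces $\sum_{i\ne j}\mu_{i}\mu_{j}=-2M$, and a second application of the triangle inequality gives $\sum_{i\ne j}|\mu_{i}||\mu_{j}|\ge 2|M|$. Adding the two estimates produces $SLE(G)^{2}\ge 4|M|$, i.e. $SLE(G)\ge 2\sqrt{|M|}$.

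For sharpness, Eulerian digraphs satisfy $\widetilde{D}=0$, so $\widetilde{SL}(G)=-S(G)$ is skew-symmetric (hence normal), which makes Schur's inequality tight; within this class I would look for digraphs whose nonzero skew-eigenvalues all have equal modulus, since that is precisely what Cauchy--Schwarz equality demands. The main obstacle is two-fold: first, the eigenvalues $\mu_{i}$ are generally complex, so the manipulations of $\sum\mu_{i}^{2}$ and $\sum_{i\ne j}\mu_{i}\mu_{j}$ must be handled via absolute values (which is what forces the $|M|$ in the lower bound rather than $\pm M$); second, isolating the exact extremal digraphs requires simultaneously characterizing normality of $\widetilde{SL}(G)$ and the equimodular condition on its nonzero spectrum, which is a genuinely digraph-theoretic condition and likely the most delicate part of the argument.
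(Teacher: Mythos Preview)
Your argument is essentially the same as the paper's: both compute $\sum\mu_i^2=2M$ (you via $\mathrm{tr}(\widetilde{SL}^2)$, the paper equivalently via the sum of $2\times 2$ principal minors), both invoke Schur's inequality to get $\sum|\mu_i|^2\le 2M_1$, and both finish with Cauchy--Schwarz for the upper bound and the double triangle-inequality estimate for the lower bound. The only shortfall is sharpness: you correctly isolate the equality conditions (normality of $\widetilde{SL}$, i.e.\ Eulerian, plus equimodular nonzero spectrum) but stop short of exhibiting witnesses---the paper supplies an explicit orientation of $K_{2n,2n}$ for the lower bound and $\alpha K_3\cup\beta K_1$ (directed triangles plus isolated vertices) for the upper bound, and you should do likewise to complete the proof.
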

\begin{proof}
The proof is very similar to those of Theorems 1 and 4, but the
extremal digraphs are determined very differently. Let
$\mu_{1},\mu_{2},\ldots,\mu_{n}$ be the eigenvalues of the skew
Laplacian
  matrix $\widetilde{SL}(G)=\widetilde{D}(G)-S(G)$, where $\widetilde{D}=diag(d^+_{1}-d^-_{1},d^+_{2}-d^-_{2},\ldots,d^+_{n}-d^-_{n})$
 and $S(G)=[s_{ij}]$ is the skew-adjacency matrix of $G$. Then from Observation \ref{obser1}, we have

$$\sum\limits_{i=1}^{n}\mu_{i}=\sum\limits_{i=1}^{n}(d^+_{i}-d^-_{i})=0. \eqno(1)$$

\noindent Note that $\sum\limits_{i<j}\mu_{i}\mu_{j}$ is equal to
the sum of the determinants of all $2\times2$ principle submatrices
of $\widetilde{SL}(G)$, which implies
\begin{eqnarray*}
   \sum\limits_{i<j}\mu_{i}\mu_{j} &=& \sum\limits_{i<j} det\left(
                                                              \begin{array}{cc}
                                                                d^+_{i}-d^-_{i} & -s_{ij} \\
                                                                -s_{ji} &  d^+_{j}-d^-_{j} \\
                                                              \end{array}
                                                            \right)
    \\
    &=& \sum\limits_{i<j}[(d^+_{i}-d^-_{i})(d^+_{j}-d^-_{j})-s_{ij}s_{ji}] \\
    &=& \sum\limits_{i<j}[(d^+_{i}-d^-_{i})(d^+_{j}-d^-_{j})+s_{ij}^{2}] \\
    &=& \sum\limits_{i<j}[(d^+_{i}-d^-_{i})(d^+_{j}-d^-_{j})]+m.
 \end{eqnarray*}
So $$~~\sum\limits_{i\neq j}\mu_{i}\mu_{j}=2\sum\limits_{i<j}\mu_{i}\mu_{j}
 =\sum\limits_{i\neq j}[(d^+_{i}-d^-_{i})(d^+_{j}-d^-_{j})]+2m. \eqno(2)$$

\noindent Combing $(1)$ and $(2)$, we get

\begin{eqnarray*}
 \sum\limits_{i=1}^{n}\mu_{i}^{2} &=& (\sum\limits_{i=1}^{n}\mu_{i})^{2}-\sum\limits_{i\neq j}\mu_{i}\mu_{j} \\
    &=& [\sum\limits_{i=1}^{n}(d^+_{i}-d^-_{i})]^{2}-[\sum\limits_{i\neq j}(d^+_{i}-d^-_{i})(d^+_{j}-d^-_{j})+2m] \\
    &=& \sum\limits_{i=1}^{n}(d^+_{i}-d^-_{i})^{2}-2m \\
    &=& 2M. ~~~~~~~~~~~~~~~~~~~~~~~~~~~~~~~~~~~~~~~~~~~~~~~~~~~~~~~~~~~~~~~~~~~~~~~~(3)
\end{eqnarray*}
Let $\widetilde{SL}(G)=[\ell_{ij}]$. By Schur's unitary triangularization
theorem \cite{Horn}, there exists a unitary matrix $U$ such that
$U^{*}\widetilde{SL}(G)U=T$, where $T=[t_{ij}]$ is an upper triangular matrix
with diagonal entries $t_{ii}=\mu_{i}$, $i=1,2,\ldots,n$. Therefore

$$\sum\limits_{i,j=1}^{n}|\ell_{ij}|^2=\sum\limits_{i,j=1}^{n}|t_{ij}|^2\geq
\sum\limits_{i=1}^{n}|t_{ii}|^2=\sum\limits_{i=1}^{n}|\mu_{i}|^2,
\eqno(4)$$

\noindent that is,
    $$\sum\limits_{i=1}^{n}|\mu_{i}|^2 \leq \sum\limits_{i,j=1}^{n}|\ell_{ij}|^2=
    \sum\limits_{i=1}^{n}(d^+_{i}-d^-_{i})^{2}+2m=2M_{1}. $$
Without loss of generality, assume that $|\mu_{1}|\geq
|\mu_{2}|\geq\ldots\geq|\mu_{n}|$. From Observation \ref{obser2}, we
know that $\mu_{n-i}=0$ for $i=0,1,\ldots,p-1$. Applying
Cauchy-Schwarz Inequality, it yields that
$$SLE(G)=\sum\limits_{i=1}^{n}|\mu_{i}|=\sum\limits_{i=1}^{n-p}|\mu_{i}|\leq \sqrt{(n-p)
\sum\limits_{i=1}^{n-p}|\mu_{i}|^2}=\sqrt{(n-p)\sum\limits_{i=1}^{n}|\mu_{i}|^2}
\leq \sqrt{2M_{1}(n-p)}.\eqno(5)$$

Now we turn to the proof of the left-hand inequality.

Since $\sum\limits_{i=1}^{n}\mu_{i}=0$, $\sum\limits_{i=1}^{n}\mu_{i}^{2}+2\sum\limits_{i<j}\mu_{i}\mu_{j}=0$.
Using $(3)$, we get $2\sum\limits_{i<j}\mu_{i}\mu_{j}=-2M$, which follows that
    $$2|M|=2|\sum\limits_{i<j}\mu_{i}\mu_{j}|\leq 2\sum\limits_{i<j}|\mu_{i}||\mu_{j}|. \eqno(6)$$
Using $(3)$ again,
    $$2|M|=|\sum\limits_{i=1}^{n}\mu_{i}^{2}|\leq \sum\limits_{i=1}^{n}|\mu_{i}|^{2}. \eqno(7)$$
From $(6)$ and $(7)$, we arrive at
\begin{center}
    $SLE(G)^{2}=(\sum\limits_{i=1}^{n}|\mu_{i}|)^{2}= \sum\limits_{i=1}^{n}|\mu_{i}|^{2}
    +2\sum\limits_{i<j}|\mu_{i}||\mu_{j}|\geq 4|M|$.
\end{center}
Consequently, $SLE(G)\geq 2\sqrt{|M|}$.

We proceed with the discussion for the sharpness of these bounds.

\begin{claim}
$SLE(G)= 2\sqrt{|M|}$ holds if and only if for each pair of
$\mu_{i_{1}}\mu_{j_{1}}$ and $\mu_{i_{2}}\mu_{j_{2}}$ ($i_{1}\neq
j_{1}$, $i_{2}\neq j_{2}$), there exists a non-negative real number
$k$ such that $\mu_{i_{1}}\mu_{j_{1}}=k\mu_{i_{2}}\mu_{j_{2}}$; and
for each pair of $\mu_{i_{1}}^{2}$ and $\mu_{i_{2}}^{2}$, there
exists a non-negative real number $\ell$ such that
$\mu_{i_{1}}^{2}=\ell\mu_{i_{2}}^{2}$.
\end{claim}

It follows from $(6)$ and $(7)$ that the equality is attained if and
only if $|\sum\limits_{i<j}\mu_{i}\mu_{j}|=
\sum\limits_{i<j}|\mu_{i}||\mu_{j}|$ and
$|\sum\limits_{i=1}^{n}\mu_{i}^{2}|=
\sum\limits_{i=1}^{n}|\mu_{i}|^{2}$. In other words, the equality
holds if and only if for each pair of $\mu_{i_{1}}\mu_{j_{1}}$ and
$\mu_{i_{2}}\mu_{j_{2}}$ ($i_{1}\neq j_{1}$, $i_{2}\neq j_{2}$),
there exists a non-negative real number $k$ such that
$\mu_{i_{1}}\mu_{j_{1}}=k\mu_{i_{2}}\mu_{j_{2}}$; and for each pair
of $\mu_{i_{1}}^{2}$ and $\mu_{i_{2}}^{2}$, there exists a
non-negative real number $\ell$ such that
$\mu_{i_{1}}^{2}=\ell\mu_{i_{2}}^{2}$, which proves Claim $1$.

A question arises: do such graphs exist ? The answer is yes. Let
$G_{1}$ be an orientation of $K_{2n,2n}$. Assume that $\{X, Y\}$ is
the bipartition of $G_{1}$. We divide $X\ (Y)$ into two disjoint
sets $X_{1}, X_{2}\ (Y_{1}, Y_{2})$ such that
$|X_{1}|=|X_{2}|=|Y_{1}|=|Y_{2}|=n$. The arc set is $\{(u_{1},
v_{1})|u_{1} \in X_{1}, v_{1}\in Y_{1}\}\bigcup\{(u_{2},
v_{2})|u_{2} \in X_{2}, v_{2}\in Y_{2}\}\bigcup\{(v_{1},
u_{2})|u_{2} \in X_{2}, v_{1}\in Y_{1}\}\bigcup\{(v_{2},
u_{1})|u_{1} \in X_{1}, v_{2}\in Y_{2}\}$; see Figure 1.

\begin{figure}[h,t,b,p]
\begin{center}
\scalebox{0.9}[0.9]{\includegraphics{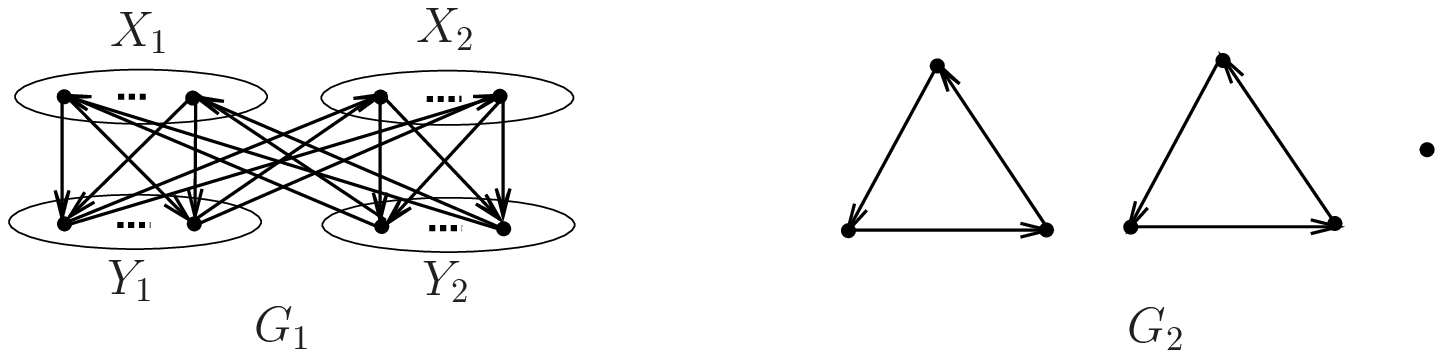}}\\[20pt]
 Figure 1 The graph for Claim 1.
\end{center}\label{fig1}
\end{figure}

Note that $d^+_{i}=d^-_{i}$ for each vertex $v_{i}$ in $G_{1}$. So
we get $2\sqrt{|M|}=2\sqrt{m}=4n$, and the skew Laplacian matrix of
$G_{1}$ is
\begin{center}
$\widetilde{SL}(G_{1})=-S(G_{1})=\left(
                          \begin{array}{cccc}
                            0 & 0 & -J & J \\
                            0 & 0 & J & -J \\
                            J & -J & 0 & 0 \\
                            -J & J & 0 & 0 \\
                          \end{array}
                        \right),$
\end{center}

\noindent where $J$ is the  $n\times n$ matrix in which each entry is $1$.

Then the skew Laplacian characteristic polynomial  $P_{\widetilde{SL}}(G_{1};
x)=det(xI-\widetilde{SL}(G_{1}))=x^{4n-2}(x^{2}+4n^{2})$, and the eigenvalues of
$\widetilde{SL}(G_{1})$ are $2ni$, $-2ni$, 0 with multiplicity 1, 1, $4n-2$,
respectively. Hence the skew Laplacian energy of $G_{1}$ is $4n$,
which implies that the lower bound is sharp.

\begin{claim}
 $SLE(G)= \sqrt{2M_{1}(n-p)}$ holds if and only if (i) $G$ is $0$-regular or
 (ii) for each $v_{i}\in V(G)$, $d^+_{i}=d^-_{i}$, and the eigenvalues of
 $\widetilde{SL}(G)$ are $0,ai,-ai$ $(a> 0)$ with multiplicity $p,\frac{n-p}{2},\frac{n-p}{2}$,
 respectively.
\end{claim}

It is evident from $(4)$ and $(5)$ that the equality holds if and
only if $T=[t_{ij}]$ is a diagonal matrix and
$|\mu_{1}|=|\mu_{2}|=\cdots=|\mu_{n-p}|$.

From Schur's unitary triangularzation theorem \cite{Horn}, we know
that $T=[t_{ij}]$ is a diagonal matrix if and only if $\widetilde{SL}(G)$ is a
normal matrix. That is
$$\widetilde{SL}^{*}(G)\cdot\widetilde{SL}(G)=\widetilde{SL}(G)\cdot\widetilde{SL}^{*}(G).$$
Since $\widetilde{SL}(G)=\widetilde{D}(G)-S(G)$, $\widetilde{SL}^{*}(G)=\widetilde{D}(G)+S(G)$,
we have
$$(\widetilde{D}(G)+S(G))\cdot(\widetilde{D}(G)-S(G))=(\widetilde{D}(G)-S(G))\cdot(\widetilde{D}(G)+S(G)).$$
By direct calculation, $S(G)\cdot\widetilde{D}(G)=\widetilde{D}(G)\cdot S(G)$.
Comparing the element on the $i$th row and the $j$th column of the
matrices on both sides, we arrive at
$$s_{ij}(d^+_{j}-d^-_{j})=(d^+_{i}-d^-_{i})s_{ij}.$$
If $v_{i}$ and $v_{j}$ are not adjacent (i.e., $s_{ij}=0$ ), then it
holds surely; if $v_{i}$ and $v_{j}$ are  adjacent, then
$s_{ij}\neq0$, and consequently $d^+_{i}-d^-_{i}=d^+_{j}-d^-_{j}$.

Now we are concerned with each component $C_{k}$ $(1\leq k\leq p)$
of $G$. Since $C_{k}$ is connected, any two vertices $u$, $w$ in
$C_{k}$ are connected by a path $P: u=v_{0}, v_{1},\cdots,v_{t}=w$.
Then $d^+(v_{i})-d^-(v_{i})=d^+(v_{i+1})-d^-(v_{i+1})$ for $0\leq
i\leq t-1$, which implies that $d^+(u)-d^-(u)=d^+(w)-d^-(w)$. It is
easy to see that $\sum\limits_{v\in V(C_{k})}[ d^+(v)-d^-(v)]=0$.
Therefore $d^+(v)-d^-(v)=0$ for each vertex $v$ in $C_{k}$. It
follows that $d^+_{i}=d^-_{i}$ for $v_{i}\in V(G)$, i.e.,
$\widetilde{D}(G)=0, \widetilde{SL}(G)=-S(G), \widetilde{SL}(C_{k})=-S(C_{k})$.

From Observation $\ref{obser2}$, we know that $0$ is an eigenvalue
of $\widetilde{SL}(G)$ with multiplicity at least $p$, and $0$ is also an
eigenvalue of $\widetilde{SL}(C_{k})(k=1,2\cdots p)$ with multiplicity at least
$1$. We distinguish the following two cases.

$Case \ 1:|\mu_{1}|=|\mu_{2}|=\cdots=|\mu_{n-p}|= 0$.

0 is the unique eigenvalue of $\widetilde{SL}(G)$ with multiplicity $n$, and
consequently $G$ is a 0-regular graph.

$Case \ 2:|\mu_{1}|=|\mu_{2}|=\cdots=|\mu_{n-p}|=a>0$.

That is, $0$ is an eigenvalue of $\widetilde{SL}(G)$ with multiplicity exactly
$p$, and the norms of all the other $n-p$ eigenvalues of $\widetilde{SL}(G)$ are
equal to $a$. Since $\widetilde{SL}(G)=-S(G)$ is a skew-symmetric matrix, its
eigenvalues are $0$ or pure imaginary numbers which appear in pairs.
We conclude that the eigenvalues of $\widetilde{SL}(G)$ are $0,ai,-ai$
$(a\geq0)$ with multiplicity $p,\frac{n-p}{2},\frac{n-p}{2}$,
respectively. This proves Claim 2.

Next, we give an example to verify the existence of such graphs. Let
$G_{2}=\alpha K_{3}\bigcup \beta K_{1}$, where $\alpha,\beta\in
\mathcal{N}$ and $3\alpha+\beta=n$; see Figure 1 for $\alpha=2,
\beta=1$. $K_{3}$ is oriented with the arc set $\{ (1,2), (2,3),
(3,1) \}$. The eigenvalues of $\widetilde{SL}(G_{2})$ are $\sqrt{3}i$,
$-\sqrt{3}i$, 0 with multiplicity $\alpha, \alpha, \alpha+\beta$,
respectively. Hence $SLE(G_{2})=2\sqrt{3}\alpha$ and
$\sqrt{2M_{1}[n-(\alpha+\beta)]}=\sqrt{4m\alpha}=2\sqrt{3}\alpha$,
which implies that the upper bound is also sharp.

Combining all above, we complete our proof.
\end{proof}

\begin{corollary}
Let $G$ be a simple digraph possessing $p$ components
$C_{1},C_{2},\ldots,C_{p}$. If $SLE(G)=\sqrt{2M_{1}(n-p)}$, then
each component $C_{i}$ is Eulerian with odd number of vertices.
\end{corollary}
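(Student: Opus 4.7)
The plan is to lean directly on Claim 2 from the proof of Theorem \ref{thm5}, which already characterizes equality in the upper bound. That claim forces one of two scenarios: either (i) $G$ is $0$-regular, or (ii) $d_i^+=d_i^-$ for every $v_i\in V(G)$ and the spectrum of $\widetilde{SL}(G)$ consists of $0$ with multiplicity $p$ and $\pm ai$ each with multiplicity $(n-p)/2$, for some $a>0$. The corollary will follow by reading the Eulerian property and the parity of $|V(C_k)|$ off of these two cases.

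First I would dispose of the Eulerian statement. In case (i) every vertex has $d_i^+=d_i^-=0$, so each component is a single isolated vertex, which is trivially Eulerian. In case (ii) the hypothesis $d_i^+=d_i^-$ holds at every vertex; combined with the connectedness of each component $C_k$, this is exactly the well-known necessary and sufficient condition for $C_k$ to possess an Eulerian circuit. In particular, in both cases $\widetilde{D}(G)=0$, hence $\widetilde{SL}(C_k)=-S(C_k)$, which is a real skew-symmetric matrix.

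For the odd cardinality of $|V(C_k)|$, I would use the standard linear-algebra fact that any real skew-symmetric matrix has even rank (via its $2\times 2$ block canonical form, equivalently because its nonzero eigenvalues come in conjugate pairs $\pm bi$). Thus
\[
|V(C_k)| \;=\; \mathrm{rank}\bigl(\widetilde{SL}(C_k)\bigr) \;+\; \mathrm{null}\bigl(\widetilde{SL}(C_k)\bigr),
\]
and parity reduces to showing the nullity of $\widetilde{SL}(C_k)$ is odd, in fact exactly $1$. In case (i) trivially $|V(C_k)|=1$. In case (ii), Observation \ref{obser2} gives multiplicity of $0$ at least $1$ in each $\widetilde{SL}(C_k)$; since the spectra of the $\widetilde{SL}(C_k)$'s partition that of $\widetilde{SL}(G)$, which contains $0$ with total multiplicity exactly $p$, and there are $p$ components, each component must contribute exactly one zero eigenvalue. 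Combined with the even rank, this forces $|V(C_k)|$ to be odd.

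The only mild obstacle is justifying that the multiplicity of $0$ in each component is exactly $1$ (not just at least $1$); this is handled by the counting argument above, using that the multiplicity of $0$ in the block-diagonal $\widetilde{SL}(G)$ is the sum of the multiplicities in the blocks $\widetilde{SL}(C_k)$. Once that is in place, the combination of Claim 2, Observation \ref{obser2}, and the even-rank property of real skew-symmetric matrices finishes both halves of the corollary.
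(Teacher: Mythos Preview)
Your proposal is correct and follows essentially the same route as the paper: invoke Claim~2 to reduce to the $0$-regular case or the case $d_i^+=d_i^-$ with spectrum $\{0^{(p)},\pm ai^{((n-p)/2)}\}$, read off the Eulerian property, then use the block decomposition $\sigma_{\widetilde{SL}}(G)=\bigcup_k \sigma_{\widetilde{SL}}(C_k)$ together with Observation~\ref{obser2} to pin the zero-multiplicity in each $C_k$ at exactly one and conclude oddness from the conjugate pairing of the nonzero eigenvalues. Your packaging of the last step via the even-rank property of real skew-symmetric matrices is just a rephrasing of the paper's ``$\pm ai$ appear in pairs'' observation.
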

\begin{proof}
If $G$ is a $0$-regular graph, each component of $G$ is an isolated
vertex, which obviously satisfies the conclusion. Otherwise, from
$Claim \  2$ we know that $d^+_{i}=d^-_{i}$ for each $v_{i}\in
V(C_{k})$, and hence $C_{k}$ is Eulerian. Furthermore, the eigenvalues
of $\widetilde{SL}(G)$ are $0,ai,-ai$ $(a> 0)$ with multiplicity
$p,\frac{n-p}{2},\frac{n-p}{2}$, respectively. It turns out that,
for each component $C_{k}$, $0$ is an eigenvalue of $\widetilde{SL}(C_{k})$ with
multiplicity exactly one and all the other eigenvalues are $ai,-ai$,
which appear in pairs. It follows that the number of vertices in
$C_{k}$ is odd.
\end{proof}

\begin{corollary}\label{cor2}

$SLE(G)\leq \sqrt{2M_{1}n}$.
\end{corollary}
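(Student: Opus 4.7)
The plan is to deduce Corollary \ref{cor2} immediately from the upper bound already established in Theorem \ref{thm5}. Recall that Theorem \ref{thm5} gives the sharper estimate
\[
SLE(G)\leq \sqrt{2M_{1}(n-p)},
\]
where $p$ denotes the number of components of $G$. Since any digraph on $n$ vertices has at least one component (indeed $p\geq 1$, and in particular $p\geq 0$), we have $n-p\leq n$, and because $M_{1}=m+\frac{1}{2}\sum_{i=1}^{n}(d_{i}^{+}-d_{i}^{-})^{2}\geq 0$, monotonicity of the square root yields $\sqrt{2M_{1}(n-p)}\leq \sqrt{2M_{1}n}$.

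Chaining these two inequalities gives $SLE(G)\leq \sqrt{2M_{1}n}$, which is exactly the claimed bound. No new estimates on the eigenvalues of $\widetilde{SL}(G)$ or on the matrix entries are required; the entire content of the corollary is the (strictly looser) restatement of Theorem \ref{thm5} obtained by replacing $n-p$ by $n$.

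There is no real obstacle here: the corollary is of interest only as a component-free version of the upper bound, convenient in situations where the number of components of $G$ is unknown or irrelevant. If one wished to add a brief remark, one could note that the bound in Corollary \ref{cor2} is never sharp (except in the degenerate case $M_{1}=0$, i.e., when $G$ has no arcs), since the sharpness conditions for Theorem \ref{thm5} (Claim~2) force $n-p$ to appear in the bound, and $p\geq 1$ whenever $n\geq 1$; hence the strict inequality $\sqrt{2M_{1}(n-p)}<\sqrt{2M_{1}n}$ holds as soon as $M_{1}>0$.
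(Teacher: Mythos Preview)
Your proof is correct and matches the paper's approach exactly: the paper states Corollary~\ref{cor2} without a separate proof, treating it as the immediate weakening of Theorem~\ref{thm5} obtained by replacing $n-p$ with $n$. Your added remark on sharpness is accurate and goes slightly beyond what the paper says, but the core argument is identical.
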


\begin{corollary}\label{cor3}
If $G$ has no isolated vertices, then $SLE(G)\leq 2M_{1}$.
\end{corollary}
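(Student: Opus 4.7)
The plan is to chain Corollary 2 (or more directly the upper bound in Theorem~\ref{thm5}) with an elementary lower bound on $M_1$ coming from the no-isolated-vertex hypothesis. First I would recall that Theorem~\ref{thm5} gives $SLE(G)\leq \sqrt{2M_1(n-p)}$, so it suffices to establish $\sqrt{2M_1(n-p)}\leq 2M_1$, which (when $M_1>0$) is equivalent to the purely degree-theoretic inequality $n-p\leq 2M_1$.

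Next I would expand $2M_1 = 2m+\sum_{i=1}^n (d_i^+-d_i^-)^2$ and discard the (non-negative) squared term to reduce the problem to showing $n-p\leq 2m$. This is where the hypothesis enters: if $G$ has no isolated vertex, then every $v_i$ satisfies $d_i^++d_i^-\geq 1$, so summing over $i$ gives $2m=\sum_i(d_i^++d_i^-)\geq n\geq n-p$. (The degenerate case $M_1=0$ forces $m=0$ and $\widetilde{D}=0$, hence $\widetilde{SL}(G)=0$ and the bound holds trivially; this is consistent with the no-isolated-vertex hypothesis only when $n=0$, so it can be dismissed in one line.)

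Putting these pieces together, $2M_1\geq 2m\geq n\geq n-p$, so multiplying by $2M_1$ gives $2M_1(n-p)\leq (2M_1)^2$, and taking square roots yields $\sqrt{2M_1(n-p)}\leq 2M_1$. Combined with Theorem~\ref{thm5}, this delivers $SLE(G)\leq 2M_1$.

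The argument has no real obstacle; the only subtle point is making sure the chain $2M_1\geq 2m\geq n\geq n-p$ is used rather than directly $n\leq 2M_1$, since $p$ could in principle be $0$ only when $n=0$. I would state the proof in two or three lines, citing Theorem~\ref{thm5} for the first inequality and the no-isolated-vertex assumption for the second.
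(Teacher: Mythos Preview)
Your argument is correct and follows essentially the same route as the paper: the paper uses Corollary~\ref{cor2} to get $SLE(G)\leq \sqrt{2M_1 n}$, then the no-isolated-vertex hypothesis gives $n\leq 2m$, and finally $m\leq M_1$ yields $\sqrt{2M_1 n}\leq 2\sqrt{M_1 m}\leq 2M_1$. Your version starts from the slightly sharper $\sqrt{2M_1(n-p)}$ of Theorem~\ref{thm5} and reaches the same conclusion via $n-p\leq n\leq 2m\leq 2M_1$, which is the same chain of elementary inequalities.
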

\begin{proof}
 If $G$ has no isolated vertices, then $n\leq 2m$. Therefore,

~~~~~~~~~~~~~~~~~~~~~~~~$ SLE(G)\leq \sqrt{2M_{1}n}\leq
2\sqrt{M_{1}m} \leq 2M_{1}.$
\end{proof}

We may mention that the bounds in Theorem \ref{thm5}, Corollary \ref{cor2} and
Corollary \ref{cor3} are in correspondence with those in
Theorem \ref{thm1} and Theorem \ref{thm4}.

\section{Concluding remarks}

Graph energy is one of the most active topics in chemical graph
theory. There have appeared several different definitions for the
(skew) Laplacian energy of undirected graphs and directed graphs.
Here we would like to summarize them below:

\emph{$1.$ The Laplacian energy of undirected graphs}

For an undirected graph $G$, there are two kinds of Lapalcian
energies $LE_{k}(G)=\sum\limits_{i=1}^{n}\mu^{2}$ and
$LE_{g}(G)=\sum\limits_{i=1}^{n}|\mu_{i}-\frac{2m}{n}|$, where
$\mu_{1},\mu_{2},\ldots,\mu_{n}$ are the eigenvalues of
$L(G)=D(G)-A(G)$.

\emph{$2.$ The Laplacian energy of directed graphs}

For a directed graph $G$, there is one kind of Laplacian energy
$LE_{m}(G)=\sum\limits_{i=1}^{n}\mu^{2}$, where
$\mu_{1},\mu_{2},\ldots,\mu_{n}$ are the eigenvalues of
$L^+(G)=D^+(G)-A^+(G)$.

\emph{$3.$ The skew Laplacian energy of simple directed graphs}

For a simple directed graph $G$, there are two kinds of skew
Laplacian energies $SLE_{k}(G)=\sum\limits_{i=1}^{n}\mu^{2}$ and
$SLE_{g}(G)=\sum\limits_{i=1}^{n}|\mu_{i}-\frac{2m}{n}|$, where
$\mu_{1},\mu_{2},\ldots,\mu_{n}$ are the eigenvalues of
$SL(G)=D(G)-S(G)$.

$4.$  In this paper, we introduce a new kind of skew Laplacian
matrix
$\widetilde{SL}(G)=\widetilde{D}(G)-S(G)=(D^+(G)-D^-(G))-(A^+(G)-A^-(G))$,
which is inspired by the popularly used Laplacian matrix
$L(G)=D(G)-A(G)=(D^+(G)+D^-(G))-(A^+(G)+A^-(G))$. From the
definition, we can see that the matrix $\widetilde{SL}(G)$ fully
reflects both the in-adjacency and the out-adjacency of a digraph
$G$. Moreover, it has some good properties such as
$trace(\widetilde{SL}(G))=0$, the sum of each row is $0$, $0$ is an
eigenvalue and $(1,,1,\ldots, 1)^T$ is an eigenvector, and so on.
These properties make $\widetilde{SL}(G))$ to be regarded as a skew
Laplacian matrix more reasonable. From the new skew Laplacian
matrix, we define a new skew Laplacian energy as
$SLE(G)=\sum\limits_{i=1}^{n}|\mu_{i}|$, where
$\mu_{1},\mu_{2},\ldots,\mu_{n}$ are the eigenvalues of
$\widetilde{SL}(G)=\widetilde{D}(G)-S(G)$. That new skew Laplacian
energy is well defined can also be seen from the following bounds,
which should be compared with those in Theorem \ref{thm1}:

 (i)~~$2\sqrt{|M|}\leq SLE(G)\leq \sqrt{2M_{1}n}$;

 (ii)~~$SLE(G)\leq \sqrt{2M_{1}(n-p)}$;

 (iii)~~If $G$ has no isolated vertices, then $SLE(G)\leq 2M_{1}$,

\noindent where
$M=-m+\frac{1}{2}\sum\limits_{i=1}^{n}(d^+_{i}-d^-_{i})^{2}$ and
$M_{1}=M+2m=m+\frac{1}{2}\sum\limits_{i=1}^{n}(d^+_{i}-d^-_{i})^{2}$.

\end{document}